\documentclass[10pt]{article}
\textwidth= 5.00in
\textheight= 7.4in
\topmargin = 30pt
\evensidemargin=0pt
\oddsidemargin=55pt
\headsep=17pt
\parskip=.5pt
\parindent=12pt
\font\smallit=cmti10

\usepackage{amssymb,latexsym,amsmath,epsfig,amsthm} 
\usepackage{url,hyperref}

\makeatletter

\renewcommand\section{\@startsection {section}{1}{\z@}
{-30pt \@plus -1ex \@minus -.2ex}
{2.3ex \@plus.2ex}
{\normalfont\normalsize\bfseries\boldmath}}

\renewcommand\subsection{\@startsection{subsection}{2}{\z@}
{-3.25ex\@plus -1ex \@minus -.2ex}
{1.5ex \@plus .2ex}
{\normalfont\normalsize\bfseries\boldmath}}

\renewcommand{\@seccntformat}[1]{\csname the#1\endcsname. }

\makeatother

\newtheorem{theorem}{Theorem}
\newtheorem{lemma}{Lemma}

\newtheorem{proposition}{Proposition}
\newtheorem{corollary}{Corollary}

\newtheorem{definition}[theorem]{Definition}
\newcommand{\seqnum}[1]{\href{https://oeis.org/#1}{\underline{#1}}}

\begin{document}
\begin{center}
\uppercase{\bf Brazilian Primes Which Are Also Sophie Germain Primes}
\vskip 20pt
{\bf Jon Grantham}\\
{\smallit Institute for Defense Analyses/Center for Computing Sciences, Bowie, Maryland}\\
{\tt grantham@super.org}\\
\vskip 10pt
{\bf Hester Graves}\\
{\smallit Institute for Defense Analyses/Center for Computing Sciences, Bowie, Maryland}\\
{\tt hkgrave@super.org}\\
\end{center}
\vskip 20pt

\vskip 30pt 

\centerline{\bf Abstract}

\noindent
We disprove a conjecture of Schott that no Brazilian prime is a Sophie Germain prime. We compute all counterexamples up to $10^{46}$. We prove conditional asymptotics for the number of Brazilian Sophie Germain primes up to $x$.

\pagestyle{myheadings}
\thispagestyle{empty}
\baselineskip=12.875pt
\vskip 30pt 

\section{Introduction}

The term ``Brazilian numbers'' comes from the 1994 
Iberoamerican
Mathematical Olympiad \cite{crux} in Fortaleza, Brazil, in a problem
proposed by the Mexican math team.\footnote{The term
appears as ``sensato'' in the original problem \cite{olimp}. The authors are puzzled by the
discrepancy with \cite{crux}.}.   They became a topic of lively discussion on the forum \href{http://www.les-mathematiques.net}{mathematiques.net}.  Bernard Schott \cite{brazil} summarized the results in the standard reference on Brazilian numbers.
 
\begin{definition}
{\rm A {\it Brazilian number} $n$ is an integer whose base-$b$ representation has all the same digits for some $1<b<n-1$.} 
\end{definition}

In other words, $n$ is Brazilian if and only if $n=m\left ({\frac {b^q-1} 
{b-1}} \right )=mb^{q-1}+\cdots+mb+m$, with $q\ge 2$. These numbers are \seqnum{A125134} in the Online Encylopedia of Integer Sequences (OEIS).

A {\it Brazilian prime} (or ``prime repunit'') is a Brazilian number that is prime; by necessity, $m=1$ and $q\ge 3$. See \seqnum{A085104} in the OEIS for the sequence of Brazilian primes. In 2010, Schott \cite{brazil} conjectured that no Brazilian prime is also a Sophie Germain prime.

Sophie Germain discovered her eponymous primes while trying to prove Fermat's Last Theorem; her work was one of the first major steps towards a proof.

\begin{definition}
{\rm A {\it Sophie Germain prime} is a prime $p$ such that $2p+1$ is also prime.}
\end{definition}

Germain showed that if $p$ is such a prime, then there 
are no non-zero integers $x,y,z$, not divisible by $p$, such that $x^p + y^p = z^p$.  
If $p$ is a Sophie Germain prime, then we say that $2p+1$ is a {\it safe prime}.

It is conjectured that there are infinitely many Sophie Germain primes, but the claim is still unproven.  The Bateman-Horn conjecture \cite{BH} implies that the number of Sophie Germain primes less than $x$ is asymptotic to 
$2C \frac{x}{\log^2 x}$, where $C = \displaystyle \prod_{p>2}\frac{p(p-2)}{(p-1)^2} \approx 0.660161$.

See \cite{ribenboim} for further information about Sophie Germain primes.

\section{Enumerating Counterexamples}

To aid our search, we use a few lemmas.

\begin{lemma}\label{odd prime}
If $p={\frac {b^q-1}{b-1}}$ is a Brazilian prime, then $q$ is an odd prime.
\end{lemma}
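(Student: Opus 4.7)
The plan is to use the fact, already noted in the excerpt, that a Brazilian prime forces $m=1$ and $q\ge 3$; so it suffices to rule out composite values of $q$, since any prime $q\ge 3$ is automatically odd.

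First, I would assume for contradiction that $q$ is composite and write $q = rs$ with $r,s \ge 2$. The key algebraic identity is the standard factorization
\[
\frac{b^q-1}{b-1} \;=\; \frac{b^{rs}-1}{b-1} \;=\; \frac{b^r-1}{b-1}\cdot\frac{b^{rs}-1}{b^r-1},
\]
where the second factor is the geometric sum $1 + b^r + b^{2r} + \cdots + b^{(s-1)r}$. I would then check that both factors are integers strictly greater than $1$: using $b\ge 2$ and $r,s\ge 2$, the factor $\frac{b^r-1}{b-1} = 1 + b + \cdots + b^{r-1}$ is at least $1+b \ge 3$, and the factor $\frac{b^{rs}-1}{b^r-1}$ is at least $1+b^r \ge 5$. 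This contradicts the primality of $p$.

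Hence $q$ must be prime, and combined with the already-established $q\ge 3$ this gives $q$ odd. The argument is short and entirely elementary; I do not anticipate any real obstacle, only care in verifying that each factor in the decomposition is genuinely larger than $1$ (which is where the hypothesis $b\ge 2$ and the definitional constraint $q\ge 3$ get used).
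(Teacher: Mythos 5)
Your proof is correct, and it takes a somewhat more elementary route than the paper. The paper argues via cyclotomic polynomials: since $\Phi_m(x)$ divides $x^q-1$ for each $m\mid q$, a composite $q$ forces $\frac{b^q-1}{b-1}=\prod_{1<m\mid q}\Phi_m(b)$ to have more than one nontrivial factor. You instead write $q=rs$ and factor the repunit directly as $\bigl(1+b+\cdots+b^{r-1}\bigr)\bigl(1+b^r+\cdots+b^{(s-1)r}\bigr)$, checking explicitly that each factor exceeds $1$; this is the same underlying phenomenon but avoids any appeal to cyclotomic polynomials, and your explicit lower bounds on the two factors are a point of care the paper glosses over. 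The one place you lean on the surrounding text rather than the hypotheses is the step $q\ge 3$: you import it from the introduction's ``by necessity'' remark, whereas the paper's proof derives $q>2$ from the definitional constraint $b<p-1$ (if $q=2$ then $p=b+1$, i.e., $b=p-1$, which is excluded). It would make your argument fully self-contained to include that one-line justification, but as written there is no gap.
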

\begin{proof}
Recall $x^{q}-1$ is divisible by the $m$th cyclotomic polynomial $\Phi_m(x)$ for $m|q$; therefore $p$ can only be prime if $q$ is also prime.  Note that
$q>2$ because $b < p-1$, so $q$ is an odd prime.
\end{proof}

The preceding lemma is also Corollary 4.1 of Schott \cite{brazil}.

\begin{lemma}
If $p$ is a Brazilian prime and a Sophie Germain prime, then $p \equiv q\equiv 2$ (mod $3$) and $b \equiv 1$ (mod $3$).
\end{lemma}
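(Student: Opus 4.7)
The plan is to pin down $p \pmod 3$ from the Sophie Germain condition, and then use the formula $p = 1 + b + b^2 + \cdots + b^{q-1}$ to read off the residues of $b$ and $q$ by case analysis modulo $3$.

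First I would dispose of the case $p = 3$: by Lemma \ref{odd prime} we have $q \geq 3$, so $p = (b^q-1)/(b-1) \geq b^2+b+1 \geq 7$. Hence $p$ is a prime strictly greater than $3$, so $p \not\equiv 0 \pmod 3$. If $p \equiv 1 \pmod 3$, then $2p+1 \equiv 0 \pmod 3$, and since $2p+1 > 3$ this contradicts the Sophie Germain hypothesis. The only remaining option is $p \equiv 2 \pmod 3$, which is the first claim.

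Next I would analyze the sum $p \equiv 1 + b + b^2 + \cdots + b^{q-1} \pmod 3$ in three cases. If $b \equiv 0 \pmod 3$, then $p \equiv 1 \pmod 3$, contradicting $p \equiv 2$. If $b \equiv -1 \pmod 3$, then because $q$ is odd (Lemma \ref{odd prime}) the alternating sum collapses to $1 \pmod 3$, again a contradiction. So we are forced into $b \equiv 1 \pmod 3$, in which case every term in the sum is $\equiv 1 \pmod 3$ and $p \equiv q \pmod 3$; combined with $p \equiv 2 \pmod 3$ this yields $q \equiv 2 \pmod 3$.

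There is no real obstacle here: once Lemma \ref{odd prime} is in hand, the argument is essentially a two-step mod-$3$ elimination, and the only thing to be careful about is excluding the small case $p = 3$ so that the Sophie Germain condition genuinely rules out $p \equiv 1 \pmod 3$.
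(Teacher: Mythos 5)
Your proof is correct and follows essentially the same route as the paper's: eliminate $p\equiv 0,1\pmod 3$ using the Sophie Germain condition, then rule out $b\equiv 0$ and $b\equiv 2\pmod 3$ by reducing the sum $1+b+\cdots+b^{q-1}$ modulo $3$ (using that $q$ is odd), leaving $b\equiv 1$ and hence $p\equiv q\pmod 3$. The only cosmetic difference is that you exclude $p=3$ via the bound $p\ge 7$, whereas the paper notes that $3$ is not Brazilian; both are fine.
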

\begin{proof}
If $p$ is a Sophie Germain prime, then $3$ cannot divide the safe prime $2p+1$, so $p$ cannot be congruent to $1$ (mod $3$).  The number $3$ is not Brazilian, so $p \neq 3$ and thus $p \equiv 2$ (mod ${3}$).

If $3|b$, then 
$$p = b^{q-1} + b^{q-2} + \cdots + b + 1 \equiv 1 \pmod{3},$$
which is a contradiction.  Lemma \ref{odd prime} states that $q$ is an odd prime, so if $b \equiv 2$ (mod ${3})$, then $p \equiv 1$ (mod ${3}$), a contradiction.  We conclude that 
$b \equiv 1$ (mod ${3}$), so that $q \equiv p$ (mod ${3}$), and therefore $q \equiv 2$ (mod ${3}$).
\end{proof}

For $q=5$, we use a modification of the technique described in \cite{wolf} to compute a table of length-$5$ Brazilian primes up to $10^{46}$. We will describe this computation in full in a forthcoming paper \cite{variations}. Of these,
$104890280$ are Sophie Germain primes. The smallest is $28792661=73^4+73^3+73^2+73+1$.
We very easily prove the primality of Sophie German primes with the Pocklington-Lehmer test.

For $q\ge 11$, we very quickly enumerate all possibilities for $b\le
10^{46/(q-1)}$. 
We find $22$ Brazilian Sophie Germain primes for $q=11$, and none for
larger $q$. (We have $q<\log_2{10^{46}}+1<154$.) The smallest is $$14781835607449391161742645225951=1309^{10}+1309^9+\cdots+1309+1.$$

While we disprove Schott's conjecture, we do have a related proposition.
\begin{proposition}
The only Brazilian prime which is a safe prime is $7$.
\end{proposition}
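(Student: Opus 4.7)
The plan is to let $p = \frac{b^q-1}{b-1}$ be a safe prime, write $p = 2s+1$ with $s$ prime, and exploit the factorization
\[ p - 1 = b(1 + b + b^2 + \cdots + b^{q-2}) = 2s. \]
By Lemma \ref{odd prime}, $q$ is an odd prime, so $q-1$ is a positive even integer. Let $\Sigma = 1 + b + \cdots + b^{q-2}$, which is a sum of $q-1 \ge 2$ terms, so in particular $\Sigma \ge 1 + b \ge 3$. The key equation is $b \Sigma = 2s$, and I will split on the parity of $b$.

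First I would dispose of the case $b$ odd. Then each term of $\Sigma$ is odd, and since there are $q-1$ (an even number) of them, $\Sigma$ is even; write $\Sigma = 2\Sigma'$. Cancelling gives $b \Sigma' = s$. Since $b \ge 3$ and $\Sigma' \ge 2$ (because $\Sigma \ge 4$), the product $b\Sigma'$ is a product of two factors both at least $2$, contradicting the primality of $s$.

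Next I would handle $b$ even. In that case every term of $\Sigma$ past the first is even, so $\Sigma$ is odd. Writing $b = 2b'$, the equation becomes $b' \Sigma = s$. Since $s$ is prime and $\Sigma \ge 3$, the factor $b'$ must equal $1$, i.e., $b = 2$. Then $p = 2^q - 1$ is a Mersenne prime and $s = \Sigma = 2^{q-1} - 1$ must also be prime. Since $2^n - 1$ is composite whenever $n$ is composite, $q-1$ must be prime; but $q-1$ is even, forcing $q - 1 = 2$, whence $q = 3$, $p = 7$, and $s = 3$. A quick check confirms $7$ is indeed a Brazilian safe prime (since $7 = 111_2$ and $7 = 2 \cdot 3 + 1$).

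The only real obstacle is the careful bookkeeping to rule out stray small cases — particularly making sure that $\Sigma \ge 3$ (which uses $q \ge 3$ from Lemma \ref{odd prime}) so that the factorization $b' \Sigma = s$ really forces $b' = 1$, and invoking the standard fact that $2^{q-1}-1$ prime forces $q-1$ prime to pin down $q = 3$. Once these are in place the proof is essentially a one-line parity argument.
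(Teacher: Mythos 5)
Your proof is correct, but it finishes by a genuinely different route than the paper's. Both arguments start from the same factorization $p-1 = b\left(1 + b + \cdots + b^{q-2}\right) = 2s$ with $s = \frac{p-1}{2}$ prime, and both ultimately rest on $q$ being an odd prime (Lemma \ref{odd prime}). The paper then observes that, because $q-1$ is even, the factor $\Sigma = 1 + b + \cdots + b^{q-2} = \frac{b^{q-1}-1}{b-1}$ is divisible by $b+1$, so the prime $s$ is divisible by the triangular number $\frac{b(b+1)}{2} \ge 3$; hence $s = \frac{b(b+1)}{2}$, and a triangular number is prime only for $b=2$, giving $s=3$ and $p=7$ uniformly, with no case analysis. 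You instead distribute the factor of $2$ according to the parity of $b$: for odd $b$, the evenness of $\Sigma$ (again using that $q-1$ is even) yields a nontrivial factorization of $s$, a contradiction; for even $b$ you are forced to $b=2$, after which you invoke the separate Mersenne fact that $2^n-1$ prime forces $n$ prime in order to conclude $q=3$. What each approach buys: the paper's choice of the divisor $\frac{b(b+1)}{2}$ makes the argument one line and case-free, while your parity split is equally elementary but pays for it with the extra Mersenne step. Incidentally, you could shorten your $b=2$ case to match the paper's spirit: since $q-1$ is even, $3 \mid 2^{q-1}-1 = s$, so $s=3$ and $p=7$ immediately, with no appeal to Mersenne primality.
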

\begin{proof}
If $p=b^{q-1}+\cdots+b+1$ is a safe prime, then $\frac{p-1}{2}=\frac 12(b^{q-1}+\cdots+b)$ must also be prime. This expression, however, is divisible by $\frac {b(b+1)}2$, which is only prime when $b=2$ and $p=7$.
\end{proof}

The list of Brazilian Sophie Germain primes is \seqnum{A306845} in the
OEIS. The
first few counterexamples were also discovered by Giovanni
Resta and Michel Marcus; see the comments for \seqnum{A085104}.

\section{Conditional Results}

The infinitude of Brazilian Sophie Germain primes, as well as the asymptotic number of them, is the consequence of well-known conjectures.

\begin{proposition}
Assuming Schnizel's Hypothesis H, there are infinitely many Brazilian Sophie Germain primes.
\end{proposition}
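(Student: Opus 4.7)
The plan is to apply Schinzel's Hypothesis~H in one variable, after fixing the length parameter $q$ in the representation $p=(b^q-1)/(b-1)$. I take $q=5$ (the smallest value consistent with $q\equiv 2\pmod 3$ from Lemma~2) and set
\[
f_1(b) = \Phi_5(b) = b^4+b^3+b^2+b+1, \qquad f_2(b) = 2f_1(b)+1 = 2b^4+2b^3+2b^2+2b+3.
\]
Both polynomials lie in $\mathbb{Z}[b]$ with positive leading coefficient. Any integer $b\geq 2$ for which $f_1(b)$ and $f_2(b)$ are simultaneously prime yields a Brazilian Sophie Germain prime $p=f_1(b)$, and since $f_1$ is strictly increasing on $[2,\infty)$, infinitely many such $b$ produce infinitely many distinct $p$. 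It therefore suffices to verify that the pair $(f_1,f_2)$ satisfies the hypotheses of Schinzel's conjecture.

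Hypothesis~H requires (i) irreducibility of each $f_i$ over $\mathbb{Z}$, and (ii) that no prime divides $f_1(n)f_2(n)$ for every $n\in\mathbb{Z}$. Condition (i) holds for $f_1=\Phi_5$ by the standard cyclotomic fact. For $f_2$ I would verify irreducibility by a short coefficient comparison ruling out every possible factorization in $\mathbb{Z}[b]$, of shape either $(2b+\alpha)(b^3+\cdots)$ or $(2b^2+\alpha b+\beta)(b^2+\gamma b+\delta)$; alternatively, one reduces modulo a small prime where $f_2$ remains irreducible.

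For (ii), it suffices to exhibit, for each prime $p$, some $b$ with $p\nmid f_1(b)f_2(b)$. At $b=1$ we have $f_1(1)f_2(1)=5\cdot 11=55$, which eliminates $p=2,3,7$; at $b=2$ we have $f_1(2)f_2(2)=31\cdot 63$, which eliminates $p=5$. For any prime $p\geq 11$, the polynomial $f_1 f_2$ is nonzero modulo $p$ (its leading coefficient is $2$) and has degree $8<p$, so it fails to vanish on at least one residue class modulo $p$.

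The main obstacle is the irreducibility check for $f_2$, which, unlike $f_1$, is not cyclotomic and so requires a genuine (if short) verification. Once that finite check is complete, Hypothesis~H immediately yields infinitely many Brazilian Sophie Germain primes of the form $(b^5-1)/(b-1)$.
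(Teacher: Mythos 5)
Your proposal is correct and rests on the same core choice as the paper: the pair $f_1=\Phi_5(b)=b^4+b^3+b^2+b+1$ and $f_2=2f_1+1$, fed into Hypothesis H, with $q=5$ fixed. The difference is entirely in how the hypotheses of the conjecture are verified, and there the two routes genuinely diverge. The paper deliberately skips the congruence check: since Section 2 exhibits values of $b$ (e.g., $b=73$) for which $f_1(b)$ and $f_2(b)$ are both prime, any prime $p$ dividing $f_1(b)f_2(b)$ for every $b$ would have to equal one of those enormous primes, which is impossible since such a $p$ exceeds $\deg f_1f_2=8$ and so would have to divide every coefficient of $f_1f_2$, contradicting the leading coefficient $2$. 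You instead carry out the direct local verification the paper explicitly avoids: $b=1$ gives $f_1f_2=5\cdot 11$, eliminating every prime other than $5$ and $11$; $b=2$ gives $31\cdot 63$, eliminating those two; and the degree argument disposes of all $p\ge 11$. Your check is complete and correct, and it buys self-containedness: it does not lean on the computations of Section 2, which a reader of the conditional result might prefer. You are also more careful than the paper on one point: Hypothesis H requires the polynomials to be irreducible (the paper's informal statement of the conjecture omits this, and its proof never addresses it), and you correctly flag it as a needed condition. The only loose end is that you sketch rather than execute the irreducibility of $f_2$; for the record it does go through: $f_2$ has no rational roots by the rational root theorem, and the coefficient comparison $f_2=(2x^2+ax+b)(x^2+cx+d)$ with $bd=3$ forces $2c^2-2c-5=0$, $2c^2-2c-3=0$, $2c^2-2c+9=0$, or $2c^2-2c+7=0$, none of which has an integer solution; alternatively $f_2$ is irreducible modulo $5$. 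So this is a routine finite check, not a gap.
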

\begin{proof}
Recall that Hypothesis H \cite{hyph} says that any set of polynomials, whose product is not identically zero modulo any prime, is simultaneously prime infinitely often. Take our two polynomials to be $f_0(x)=x^4+x^3+x^2+x+1$ and
$f_1(x)=2x^4+2x^3+2x^2+2x+3$. Then $f_0(b)$ is Brazilian and $f_1(b)=2f_0(b)+1$. Rather than checking congruences, it suffices to note the existence of the above primes of this form to see that the conditions of Hypothesis H are satisfied.
\end{proof}

The Bateman-Horn Conjecture \cite{BH} implies a more precise statement about the number of Brazilian Sophie Germain primes.

\begin{proposition}
For an odd prime $q$, let $\Phi_q(x)$ be the $q$th cyclotomic polynomial.
Assuming the Bateman-Horn Conjecture, the number of values of $b<x$ such that $\Phi_q(b)$ and $2\Phi_q(b)+1$ are simultaneously prime is $0$ or $\sim C_q \frac{x}{{\log^2 x}}$, for some positive constant $C_q$, depending on whether
$\Phi_q(b)(2\Phi_q(b)+1)$ is identically zero modulo some prime $p$.
\end{proposition}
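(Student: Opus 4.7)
The plan is to apply the Bateman--Horn Conjecture directly to the pair $f_0(x) := \Phi_q(x)$ and $f_1(x) := 2\Phi_q(x)+1$, each of degree $q-1$ with positive leading coefficient. For two distinct irreducible integer polynomials with positive leading coefficients and no local obstruction, Bateman--Horn predicts
$$
\#\{b \le x : f_0(b),\, f_1(b) \text{ simultaneously prime}\}
\;\sim\; \frac{1}{(q-1)^2} \prod_p \frac{1 - \omega_q(p)/p}{(1-1/p)^2} \cdot \frac{x}{\log^2 x},
$$
where $\omega_q(p)$ denotes the number of residues $n\pmod p$ with $p \mid f_0(n)f_1(n)$.

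First I would attend to the algebraic hypotheses. Irreducibility of $\Phi_q$ is classical. The polynomial $f_1(x) = 2x^{q-1}+2x^{q-2}+\cdots+2x+3$ is primitive, since the gcd of its coefficients equals $\gcd(2,3)=1$. If $f_1$ happens to factor in $\mathbb{Z}[x]$ as a product $g_1 g_2$ with both factors nonconstant, then $f_1(b)$ can be prime only when $|g_1(b)|=1$ or $|g_2(b)|=1$, which occurs at only finitely many integers $b$. In that event the count is bounded in $x$, delivering the ``$0$'' alternative of the proposition. Thus I may henceforth assume $f_1$ is irreducible.

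With irreducibility in hand, the dichotomy in the proposition matches the standard Bateman--Horn dichotomy. Either there exists a prime $p$ such that $p\mid f_0(n)f_1(n)$ for every integer $n$ -- in which case primality forces $f_0(b)=p$ or $f_1(b)=p$, equations with only finitely many integer solutions since both polynomials have degree $q-1 \ge 2$ -- or no such prime exists, and Bateman--Horn supplies the asymptotic above with
$$
C_q \;=\; \frac{1}{(q-1)^2} \prod_p \frac{1 - \omega_q(p)/p}{(1-1/p)^2},
$$
a convergent product of positive factors, hence a positive real constant.

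The only genuinely nontrivial algebraic step I anticipate as an obstacle is the irreducibility of $2\Phi_q(x)+1$ for every odd prime $q$; I would not attempt to prove it in general. The trick is the primitivity observation above, which shows that any hypothetical reducible factorization of $f_1$ is automatically absorbed into the ``$0$'' alternative. Consequently, no further algebraic work beyond citing Bateman--Horn is needed to complete the proof.
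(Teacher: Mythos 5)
Your proof follows the same route as the paper's: a direct appeal to the Bateman--Horn Conjecture for the pair $\Phi_q(x)$ and $2\Phi_q(x)+1$, with the constant read off from the conjecture's statement. (One small discrepancy: the paper normalizes its constant by $q^2$ where you use $(q-1)^2$; since both polynomials have degree $q-1$, your normalization is the standard Bateman--Horn one, and the difference is immaterial because the proposition only asserts ``some positive constant $C_q$.'')

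The one place where you go beyond the paper is also where your argument has a genuine flaw. You correctly observe that Bateman--Horn requires $2\Phi_q(x)+1$ to be irreducible, and you propose to absorb a hypothetical reducible case into the ``$0$'' alternative, since a reducible polynomial of degree $\ge 2$ takes prime values only finitely often. But the proposition's dichotomy is governed solely by whether $\Phi_q(b)\bigl(2\Phi_q(b)+1\bigr)$ vanishes identically modulo some prime, not by reducibility, and reducibility of $2\Phi_q(x)+1$ does not force such a local obstruction: the product has degree $2(q-1)$ and leading coefficient $2$, so for any prime $p>2(q-1)$ it has too few roots to vanish identically mod $p$, and nothing guarantees an obstruction at a small prime. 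Hence if $2\Phi_q(x)+1$ were reducible for some $q$ with no local obstruction, your own finiteness argument would show the count is bounded while the proposition asserts it is $\sim C_q\, x/\log^2 x$ --- that is, your escape hatch would refute the statement rather than prove it. To prove the proposition exactly as written you need the irreducibility of $2\Phi_q(x)+1$ for every odd prime $q$ (the paper's one-line proof silently assumes this as well); the primitivity/finiteness workaround is not a substitute for it.
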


\begin{proof}
This follows immediately from the Bateman-Horn conjecture, with $C_q=\left(\prod_p \frac{1-N(p)/p}{(1-1/p)^2}\right)/q^2$, where $N(p)$ is the number of roots of $\Phi_q(b)(2\Phi_q(b)+1)$ modulo $p$.
\end{proof}

\begin{corollary}
Assuming the Bateman-Horn Conjecture, the number of Brazilian Sophie Germain primes up to $x$ is $\sim C \frac{x^{1/4}}{{\log^2 x}}$, for some $C$.
\end{corollary}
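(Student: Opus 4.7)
The plan is to partition the count by the exponent $q$ and sum the contributions given by the preceding proposition. Every Brazilian prime of length $q$ has the form $\Phi_q(b)$, since $\tfrac{b^q-1}{b-1}=\Phi_q(b)$ when $q$ is prime; by Lemma 2 the only allowed exponents are odd primes $q \equiv 2 \pmod 3$, so $q \ge 5$. Since $\Phi_q(b) \sim b^{q-1}$, the condition $\Phi_q(b) \le x$ is equivalent to $b \le (1+o(1))\,x^{1/(q-1)}$.

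Next, I would apply the preceding proposition with $y = x^{1/(q-1)}$. Using $\log y = (q-1)^{-1}\log x$, the count of Brazilian Sophie Germain primes of length $q$ up to $x$ is either $0$ or asymptotic to
\[
C_q \cdot \frac{y}{\log^2 y} \;=\; (q-1)^2\, C_q \cdot \frac{x^{1/(q-1)}}{\log^2 x}.
\]
For $q=5$ this contributes $16\,C_5 \cdot x^{1/4}/\log^2 x$, while each $q \ge 7$ contributes $O(x^{1/6}/\log^2 x)$. Because $\Phi_q(2) \ge 2^{q-1}$ forces $q \le \log_2 x + 1$, the sum of the tail over $q \ge 7$ is $O(x^{1/6}/\log x) = o(x^{1/4}/\log^2 x)$. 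Double-counting of primes admitting two Brazilian representations (a Goormaghtigh phenomenon, conjecturally finite and in any case very sparse) is likewise negligible. Setting $C = 16\,C_5$ then gives the claimed asymptotic.

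The main obstacle is verifying that we are in the nondegenerate case of the preceding proposition for $q=5$, i.e., that $C_5 > 0$. This is settled by Section 2: the explicit example $28792661 = \Phi_5(73)$, together with its companion safe prime $2 \cdot 28792661 + 1$, rules out any prime dividing $\Phi_5(b)(2\Phi_5(b)+1)$ identically, so the Bateman-Horn constant for this polynomial pair is positive. (The abundance of further examples found in the same computation makes this conclusion robust.)
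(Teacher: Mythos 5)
Your proof follows the same route as the paper's own: apply the preceding proposition for each admissible $q$, rescale via $b \le x^{1/(q-1)}$ so that $\log$ of the $b$-range is $\log x/(q-1)$, obtain $(q-1)^2 C_q\, x^{1/(q-1)}/\log^2 x$ per exponent, and observe that the $q=5$ term dominates, giving $C = 16\,C_5$. You additionally make explicit three points the paper leaves implicit --- the tail bound over larger $q$ (where the trivial count of $b \le x^{1/(q-1)}$ values suffices and avoids any uniformity issue in the constants $C_q$), the negligibility of primes with two Brazilian representations, and the positivity of $C_5$ via the explicit example $\Phi_5(73)=28792661$ from Section 2 --- all of which are correct and strengthen the argument.
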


\begin{proof}
To find the number of Brazilian Sophie Germain primes less than $y$ of the form $\Phi_q(b)$ for a fixed $q$, we apply the preceding proposition, substituting $x=y^{1/(q-1)}$, and get $\sim C'_q \frac{y^{1/{(q-1)}}}{{\log^2 y}}$, with $C'_q=C_q(q-1)^2$. We sum over all $q\equiv 2$ (mod $3$) and notice that the $q=5$ term dominates. We can thus take $C=C'_5$.
\end{proof}

\begin{parindent}0pt
{\bf Acknowledgement. } Thanks to Enrique Trevi{\~n}o for helping track down the reference in
\cite{olimp}, and to the referee for helpful comments.
\end{parindent}

\providecommand{\bysame}{\leavevmode\hbox to3em{\hrulefill}\thinspace}
\providecommand{\MR}{\relax\ifhmode\unskip\space\fi MR }
\providecommand{\MRhref}[2]{%
  \href{http://www.ams.org/mathscinet-getitem?mr=#1}{#2}
}
\providecommand{\href}[2]{#2}

\bigskip
\hrule
\bigskip

\noindent 2020 {\it Mathematics Subject Classification}:
Primary 11A41; Secondary 11A63.

\noindent \emph{Keywords:} Brazilian number, Sophie Germain prime, prime repunit, repunit.

\end{document}